\newtheorem{thm}{Theorem}[section]
\newtheorem{lem}[thm]{Lemma}
\theoremstyle{definition}
\newtheorem{defn}[thm]{Definition}
\newtheorem{rmk}[thm]{Remark}
\newcommand{\C}{{\mathcal C}}
\newcommand\Rep{\operatorname{Rep}}
\newcommand{\Irr}{\operatorname{Irr}}
\newcommand\Vect{\operatorname{Vec}}
\newcommand\Hom{\operatorname{Hom}}
\newcommand\Id{\operatorname{Id}}
\newcommand\Mod{\operatorname{Mod}}
\newcommand\Bimod{\operatorname{Bimod}}
\newcommand\F{\mathbb{F}}
\newcommand\Opext{\operatorname{Opext}}
\newcommand\FPdim{\operatorname{FPdim}}
\newcommand\Tr{\operatorname{Tr}}
\title[Noncommutative near-group categories]{Algebraic realization of noncommutative near-group fusion categories}
\author{Masaki Izumi}
\address{Graduate School of Science, Kyoto University, Kitashirakawa Oiwake-cho, Sakyo-ku, Kyoto 606-8502, Japan}
\email{izumi@math.kyoto-u.ac.jp}
\author{Henry Tucker}
\address{Dept. of Mathematics, University of California, San Diego, 9500 Gilman Drive \#0112, La Jolla, CA 92093, USA}
\email{hjtucker@ucsd.edu}
\keywords{fusion categories; near-group categories; group cohomology; Frobenius-Schur indicators}
\subjclass[2010]{18D10; 16T05}
\date{\today}
\begin{document}

\maketitle
\begin{abstract} Noncommutative near-group fusion categories were completely classified in the previous work of the first named author by using an operator algebraic method (and hence under the assumption of unitarity), and they were shown to be group theoretical though the corresponding pointed categories were not identified. In this note we give a purely algebraic construction of the noncommutative near-group fusion categories starting from pointed categories categorically Morita equivalent to them. 
\end{abstract}



\section{Introduction}
Fusion categories are considered to be generalization of the category $\Rep(G)$ of finite dimensional representations of finite groups $G$. 
As  we can associate the representation ring $R(G)$ with $\Rep(G)$, 
given a fusion category $\C$ we can define the Grothendieck ring $K_0(\C)$, which expresses the fusion rules of the tensor product in $\C$. 
That is to say, $K_0(\C)$ is  the $\mathbb{Z}$-based ring with basis given by $\Irr(\C)$, the set of isomorphism classes of simple objects in $\C$ 
with multiplication and addition given by the tensor product and direct sum operations, respectively. 
A natural classification question asks: what fusion categories have a given ring $R$ as their Grothendieck ring, i.e. 
which $\C$ have $K_0(\C) \cong R$ as $\mathbb{Z}$-based rings? 
The answer in the case of a group ring $\mathbb{Z}G$ for a finite group $G$ has a well-known answer due to Eilenberg and Mac Lane: 
if $\C$ is a fusion category with $K_0(\C)\cong \mathbb{Z}G$ then $\C$ is monoidally equivalent to a category $\Vect_G^\omega$ of 
$G$-graded complex vector spaces with associativity natural isomorphisms given by some 3-cocycle $\omega \in Z^3(G,\mathbb{C}^\times)$. 
These are the pointed fusion categories. 
Note that the 3-cocycle condition is equivalent to the pentagon axiom for these monoidal categories; two such categories are equivalent 
if and only if the groups are isomorphic and the 3-cocycles are cohomologous in $H^3(G, \mathbb{C}^\times)$.

Further answers to this question have focused primarily on the near-group categories formally introduced in \cite{S03}. 
These are fusion categories with exactly one object which is non-invertible under the tensor product. 
More precisely:

\begin{defn} A near-group category $\C$ with a finite group $G$ is a fusion category with the the Grothendieck ring 
$K_0(\C) = \mathbb{Z}[G\cup \{\rho\}]$ where multiplication is given by the group law of $G$ and:
\[
\rho g = \rho = g \rho \quad \textrm{and} \quad \rho^2 = m\rho + \sum_{g\in G} g.
\]
We denote this ring by $NG(G,m)$. 
The number $m$ is called the multiplicity of $\C$. 
\end{defn}

The first major classification of categories of this type was by Tambara-Yamagami \cite{ty} for those with $m=0$. 
By solving directly the pentagon equations arising from the associativity law they showed that the near-group categories with $G$ and multiplicity $m=0$ are completely classified by the data $(b,\epsilon)$ where $b$ is a non-degenerate symmetric bicharacter of $G$, and $\epsilon \in \{1,-1\}$ corresponding to the second categorical Frobenius-Schur indicator of $\rho$. 
The group $G$ must be abelian in this case. 

The features of near-group categories with $m>0$ are considerably different depending on whether the Frobenius-Perron dimension $\FPdim \rho$ of $\rho$ is an integer or not (see \cite{I17} and the arXiv version of \cite{O15}). 
The integral case with abelian $G$ is the easiest among the others, and the classification was already done in \cite{ENO04}. 
Such a category is categorically Morita equivalent to a pointed category with the affine group $\F_q\rtimes \F_q^\times$, and $G$ is identified with the dual group of $\F_q^\times$, isomorphic to the cyclic group of order $q-1$. 
The variety coming from the third cohomology was also precisely determined in \cite{ENO04}.  

In the irrational case, the group $G$ is always abelian and the number $m$ is a multiple of $|G|$. 
Under the assumption of unitarity, the classification problem in this case is completely reduced to solving the polynomial equations 
obtained by the first named author by using the Cuntz algebras (see \cite{EG14}, \cite{I93}, \cite{I01II}, \cite{I17}). 
Their Drinfeld centers were also computed in \cite{EG14} and \cite{I01II} applying the tube algebra method presented in \cite{I01I}. 

The remaining case is when $G$ is noncommutative, and somehow this case was overlooked in \cite{S03}. 
The following theorem was proved by the first named author with operator algebra technique:

\begin{thm}[{\cite[Theorem 6.1]{I17}}]
Let $G$ be a non-abelian finite group. 
Then a unitary near-group category $\C$ with $K_0(\C)\cong NG(G,m)$ exists if and only if $G$ is an extra-special $2$-group. 
In particular $|G| = 2^{2n+1}$, $m = 2^n$, and $\FPdim(\rho) = 2^{n+1}$. 

Furthermore, for each extra-special $2$-group there exists exactly three different unitary near-group categories, and they are distinguished 
by the third Frobenius-Schur indicators of $\rho$.
\end{thm}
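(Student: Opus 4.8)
The plan is to separate the arithmetic skeleton of the fusion rule from the genuinely categorical constraints imposed by associativity. First I would record the dimension identity: since $\rho$ is self-dual (the unit occurs with multiplicity one in $\rho^2$) and $\rho^2 = m\rho + \sum_{g\in G} g$, writing $d = \FPdim(\rho)$ yields $d^2 = md + |G|$, hence $d = \tfrac{1}{2}\bigl(m + \sqrt{m^2 + 4|G|}\bigr)$. This reduces the numerical part of the statement to pinning down the single pair $(m,|G|)$, and shows that $m = 2^n$, $|G| = 2^{2n+1}$ are mutually consistent with $d = 2^{n+1}$. Note the Grothendieck ring alone cannot detect commutativity of $G$ (associativity of $K_0(\C)$ holds for every $G$), so the extra-special condition must come entirely from the pentagon.

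Next I would set up the categorical data in a $C^*$-framework, exploiting unitarity. Fix a unitary solution of the conjugate equations for $\rho$ and choose isometries spanning the intertwiner spaces: one isometry $T_g \in \Hom(\iota,\rho^2)$ for each $g \in G$ and $m$ isometries $S_i \in \Hom(\rho,\rho^2)$, subject to the Cuntz-type relations $\sum_{g} T_gT_g^* + \sum_i S_iS_i^* = 1$, $T_g^*T_h = \delta_{g,h}$, $S_i^*S_j = \delta_{ij}$, $T_g^*S_i = 0$. The two isomorphisms $g\otimes\rho \cong \rho \cong \rho\otimes g$ supply unitaries $u_g$ and $v_g$, while the associator restricted to the pointed part is a $3$-cocycle $\omega \in Z^3(G,\mathbb{C}^\times)$. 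Evaluating the pentagon on the triples that involve $\rho$ then converts the whole structure into a finite system of polynomial equations in these scalars.

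The heart of the argument is to extract from this system a nondegenerate form on $G$. The discrepancy between the left and right actions, measured through the associator $g\otimes\rho\otimes h \to \rho$ taken in its two orders, defines a pairing $\beta\colon G\times G \to \mathbb{C}^\times$ whose values on commuting pairs trivialize and which therefore factors through the commutator. Self-duality forces the relevant phases (which also control the second Frobenius--Schur indicator of $\rho$) to be square roots of unity, so $\beta$ is $\{\pm1\}$-valued, i.e. $\F_2$-valued; the multiplicity-one occurrence of each $g$ in $\rho^2$ then forces $\beta$ to be \emph{nondegenerate} in order for the pentagon to admit any global solution. A nondegenerate alternating $\F_2$-valued form whose radical is the commutator subgroup is precisely the defining datum of an extra-special $2$-group: it forces $[G,G] = Z(G) \cong \F_2$ and $G/Z(G) \cong \F_2^{2n}$ with a symplectic commutator form and a quadratic refinement (the squaring map), whose $\Arf$ invariant distinguishes the two isomorphism types. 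Feeding $|G| = 2^{2n+1}$ back into the dimension identity gives $m = 2^n$ and $\FPdim(\rho) = 2^{n+1}$. I expect this nondegeneracy step to be the main obstacle, since it requires ruling out solvability of the pentagon in the presence of any degeneracy, not merely exhibiting a local obstruction.

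Finally, for a fixed extra-special $2$-group I would solve the residual pentagon system. After normalizing $\omega$ and the action unitaries $u_g, v_g$, the remaining freedom collapses to a single scalar parameter constrained by the third Frobenius--Schur indicator $\nu_3(\rho)$; the polynomial equation governing it admits exactly three admissible roots, producing exactly three inequivalent unitary near-group categories, mutually separated by the value of $\nu_3(\rho)$. The ``if'' half of the statement is then completed by exhibiting, for each root, an explicit associator verifying the pentagon, which is exactly the algebraic realization carried out in the body of this note.
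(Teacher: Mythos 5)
First, a point of orientation: the paper you are reading does \emph{not} prove this theorem. It is quoted verbatim from \cite[Theorem 6.1]{I17}, where it is established by the operator-algebraic (Cuntz algebra) method; the present note only uses it as input and supplies an independent \emph{existence} construction via group-theoretical categories. So the comparison here is with Izumi's original argument, not with anything in this paper. Your outline does reproduce the broad strategy of \cite{I17}: the dimension identity $d^2=md+|G|$ is correct (and the self-duality of $\rho$ from $\mathbf{1}\subseteq\rho^2$ is the right observation), the unitary/$C^*$ setup with isometries spanning the intertwiner spaces and the pentagon rewritten as polynomial equations is exactly the method of \cite{I17}, and the eventual appearance of a commutator pairing forcing the extra-special structure is the right destination. (One notational slip: your $T_g$ should lie in $\Hom(g,\rho^2)$, not $\Hom(\iota,\rho^2)$, which is one-dimensional; and ``a nondegenerate form whose radical is the commutator subgroup'' is self-contradictory --- what is meant is that the commutator pairing descends to a nondegenerate symplectic form on $G/Z(G)$ valued in $Z(G)\cong[G,G]\cong\mathbb{Z}_2$.)

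The genuine gaps are at the two places where the theorem actually gets proved. First, you assert that ``self-duality forces the relevant phases to be square roots of unity, so $\beta$ is $\{\pm1\}$-valued'' and that multiplicity-one occurrence of each $g$ in $\rho^2$ ``forces $\beta$ to be nondegenerate.'' Neither follows from what you have written: the second Frobenius--Schur indicator controls a single sign, not a $G\times G$-indexed family of phases, so the fact that $G$ is a $2$-group at all is not established; and nondegeneracy is precisely the hard global statement that in \cite{I17} comes out of analyzing the projective action of $G\times G$ on the $m$-dimensional multiplicity space $\Hom(\rho,\rho^2)$, whose irreducibility (forced by the fusion rules) yields $2m^2=|G|$ and the extra-special structure. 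As stated, your argument would not rule out, say, a noncommutative $G$ of odd order. Second, the count ``exactly three admissible roots, distinguished by $\nu_3(\rho)$'' is asserted, not derived; obtaining the number three requires actually solving the residual polynomial system for each extra-special $2$-group, which occupies most of \cite[\S 6]{I17} and cannot be replaced by the remark that a single scalar parameter remains. In short, the skeleton is the right one, but the two load-bearing steps --- nondegeneracy of the commutator pairing (hence the extra-special conclusion) and the enumeration of solutions --- are exactly the parts left unproved.
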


Recall that a finite 2-group $G$ is said to be an extra-special if the center and the derived subgroup of $G$ coincide and have order 2. 
It is known that the order of every extra-special 2-group is an odd power of 2. 
Note that for $n=1$ above the group $G$ is either the dihedral group $D_8$ or the quaternion group $Q_8$. 
Furthermore, it is known that there are exactly two non-isomorphic extra-special $2$-groups $G$ for each order $|G|=2^{2n+1}$, 
and they are obtained as central products of $D_8$ or $Q_8$ (see \cite[Exercise 5.3.7,(i)]{R93}).

In \cite[Corollary 6.14]{I17} it was also shown that the noncommutative near-group categories are group theoretical, that is, 
categorically Morita equivalent to pointed categories. 
Summing up these results, we see that for a given odd natural number $2n+1$, there exist exactly 6 unitary noncommutative near-group categories 
with global dimension $2^{2n+1}\cdot 3$, and hence the corresponding pointed categories are given by groups of order $2^{2n+1}\cdot 3$. 
It was conjectured in \cite[Example 6.16]{I17} that all the 6 pointed categories have the group $\F_2^{2n}\rtimes SL(2,\F_2)$ with 6 different third cohomology classes 
coming from $H^3(S_3,\mathbb{C}^\times)\cong \mathbb{Z}_6$ (recall that $SL(2,\F_2)$ is isomorphic to the symmetric group $S_3$). 
The purpose of this note is to verify this conjecture (Theorem \ref{basic}, Theorem \ref{general}). 

To investigate properties of group theoretical categories arising from the group $\F_2^{2n}\rtimes SL(2,\F_2)$ together with 
appropriate choices of 3-cocycles and subgroups, we employ Schauenburg's formula \cite{S15} for the higher Frobenius-Schur indicators 
and Gelaki and Naidu's description \cite{GN09} of the group of invertible objects. 
Our computation explicitly verifies a Frobenius-Schur indicator rigidity phenomenon for the noncommutative near-group fusion rules: 
each inequivalent category with the given fusion rules must have distinct Frobenius-Schur data. 

\subsection{Acknowledgments}
This project was made possible by NSF East Asia and Pacific Summer Institutes (EAPSI) award \#1613812 titled ``Classification of Fusion Categories with one Non-Invertible Object'' supported jointly by NSF and Japan Society for Promotion of Science (JSPS). Both authors wish to thank Richard Ng for many useful discussions. 
The first named author is supported in part by JSPS KAKENHI Grant Number JP15H03623. 

\section{Group-theoretical fusion categories}

For our purposes {\bf tensor categories} are defined to be $\mathbb{C}$-linear abelian monoidal categories. We refer the reader to \cite{EGNO15} for a comprehensive overview of the theory of tensor categories. {\bf Fusion categories} are tensor categories that are:
\begin{itemize}
    \item {\it rigid}, that is, objects have duals, and
    \item {\it semisimple} with finitely many simple objects and simple unit object.
\end{itemize}
The rigidity morphisms $\textrm{ev}_V: V^*\otimes V \to \mathbbm{1}$ and $\textrm{coev}_V: \mathbbm{1} \to V \otimes V^*$ are given by the diagrams:

\[
\gbeg33
\got1{V^{*}}\gvac1\got1V\gnl
\gwev3\gnl
\gend
\qquad
\gbeg33
\gnl
\gwdb3\gnl
\gob1 V \gvac1 \gob1{V^{*}}
\gend
\]
satisfying the relations:
\[
        \gbeg64
        \gvac4\got{1}{V} \gnl
        \gwdb3 \gvac1 \gcl1 \gnl
        \gcl1 \gvac1 \gwev3 \gnl
        \gob{1}{V}
        \gend =
        \gbeg14
        \got{1}{V}\gnl
        \gcl2\gnl
        \gob{1}{V}
        \gend
        \qquad \textrm{and} \qquad
        \gbeg64
        \got{1}{V^{*}} \gvac4 \gnl
        \gcl1 \gvac1 \gwdb3 \gnl
        \gwev3 \gvac1 \gcl1 \gnl
        \gvac4 \gob{1}{V^{*}}
        \gend =
        \gbeg14
        \got{1}{V^{*}}\gnl
        \gcl2\gnl
        \gob{1}{V^{*}}
        \gend
\]

\subsection{Categorical Morita equivalence}

Let $\C$ be a fusion category. A {\bf module category} $\mathcal{M}$ over $\C$ is the categorical analog of a module over the ring $K_0(\C)$. 
That is, a linear abelian category with an action given by a bifunctor $\otimes_\mathcal{M}: \C \times \mathcal{M} \to \mathcal{M}$ equipped 
with a module associativity natural isomorphism $m: (\cdot \otimes_\C \cdot) \otimes_\mathcal{M} \cdot \to \cdot \otimes_\C ( \cdot \otimes_\mathcal{M} \cdot)$ 
which satisfies the usual pentagon relation. 
There are likewise analogous definitions for indecomposable module categories and $\C$-module functors. 
(See \cite[Chap. 7]{EGNO15} for further details and definitions.) 
It was shown in \cite{O01} that module categories $\mathcal{M}$ over a fusion category $\C$ are characterized by (associative) {\bf algebra objects} in $\C$. $A\in \C$ is an algebra object if it has multiplication $\mu:A\otimes A \to A$ and unit $\eta: \mathbf{1} \to A$ morphisms in the category satisfying some compatibility conditions with the monoidal structure $\otimes$ of $\C$. Similarly one may look at the category $\Mod_\C(A)$ of {\bf $A$-module objects} in $\C$, and Ostrik's result tells us that, for a given $\C$-module category $\mathcal{M}$ we have $\mathcal{M}\simeq\Mod_\C(A)$ as abelian categories for some algebra object $A\in \C$.

Given an indecomposable module category $\mathcal{M}=\Mod_\C(A)$ over $\C$ we can form the categorical {\bf Morita dual} tensor category 
$\C_\mathcal{M}^*:=\mathcal{E}nd_\C(\mathcal{M})$, that is the category of $\C$-module endofunctors of $\mathcal{M}$. 
Equivalently, the dual arises from the category of {\bf $(A,A)$-bimodule objects} in $\C$:
\[
\C^*_\mathcal{M} \simeq \Bimod_\C(A)
\]
We say that the two fusion categories $\C$ and $\C_\mathcal{M}^*$ are {\bf categorically Morita equivalent}. 

A fusion category $\C$ is said to be {\bf group-theoretical} if it is categorically Morita equivalent to a pointed fusion category. 
Given a subgroup $H\leq G$ and a 2-cochain $\psi\in C^2(H,\mathbb{C}^\times)$ such that $d\psi = \omega|_H$ we may form the {\bf twisted group algebra} $\mathbb{C}^\psi H$. The choice of $\psi$ makes this an algebra object in $\Vect_G^\omega$, and in fact all module categories $\mathcal{M}$ over $\Vect_G^\omega$ are of the form $\Mod_{\Vect_G^\omega}(\mathbb{C}^\psi H)$ for some pair $(H,\psi)$ as above. Hence any group-theoretical category is of the following form:
\[
\C(G,\omega,H,\psi):= \Bimod_{\Vect^G_\omega}(\mathbb{C}^\psi H)
\]
For example, $\C(G,1,G,1) \simeq \Rep(G)$ and $\C(G,\omega, \{e\},1)\simeq \Vect_G^\omega $

For the remainder of this note we shall denote the adjoint actions of $G$ on itself by  
$x\lhd g:= g^{-1}xg$ for all $x,g\in G$. 
Furthermore, for $f\in C^k(G,\mathbb{C}^\times)$ we define:
\[
^g f(x_1, \ldots, x_k) := f(x_1\lhd g, \ldots, x_k\lhd g). 
\]

The simple objects of $\C(G,\omega,H,\psi)$ were found in \cite{O02} (see also \cite[Example 9.7.4]{EGNO15}) 
to be parameterized by pairs $(HgH, \chi)$ where:
\begin{itemize}
\item $HgH \in H\backslash G/H$ is a double coset with representative $g\in G$, and
\item $\chi$ is an irreducible projective character of the group $H^g:=H\cap gHg^{-1}$ with Schur multiplier $\psi^g\in Z^2(H^g,\mathbb{C}^\times)$, a well-defined $2$-cocycle defined by:
\[
\psi^g(h,k) := 
\psi(h,k)\psi(h^{-1}\lhd g, k^{-1}\lhd g) 
\frac{  \omega(h, k, g) \omega(h, kg, k^{-1}\lhd g )  }{  \omega(hkg, k^{-1}\lhd g, h^{-1}\lhd g) }.
\]
\end{itemize}
We denote by $X_{g,\chi}$ the corresponding simple object. 
Due to an important theorem of Natale in \cite{N05}, given a group-theoretical fusion category $\C(G,\omega,H,\psi)$ 
there exists a tensor equivalent one $\C(G,\tilde{\omega},H,1)$ with trivial $2$-cochain $\psi$. 
Moreover, $\tilde{\omega}$ can be chosen to be \textbf{adapted} in the sense that it is trivial on $G\times G\times H$. 
So for any group-theoretical fusion category we have:
\[
\C(G,\omega,H,\psi)\simeq_\otimes \C(G,\tilde{\omega},H,1)
\]
In this case, the 2-cocycle $\psi^g(h,k)$ takes the form $\tilde{\omega}(h,k,g)$, which we denote 
by $\tilde{\omega}_g(h,k)$.

\subsection{Frobenius-Schur indicators}\label{FS}

The group theoretical fusion categories are {\bf pivotal} fusion categories: there is a natural isomorphism $j:(-)^{**} \to \Id_\C$ from the double dual endofunctor on $\C$ to the identity endofunctor. Define $X^{\otimes n}$ to be the $n$-fold tensor product of $X$ with all parentheses to the right. Now we have the following invariant of pivotal categories:

\begin{defn}[{\cite{NS07}}] 
 Given an object $X$ in a pivotal fusion category $\C$ we define the linear map $E_X^{(k)}$ on the finite-dimensional vector space $\Hom(\,\mathbbm{1},X^{\otimes k})$ as follows: 
\[
E^{(k)}_X: \underbrace{\gbeg43\glmpb\gdnot f\gcmpb\gcmpb\grmpb\gnl
  \gcl1\gcl1\gnot{\cdots}\gvac1\gcl1\gnl
  \gob1X\gob1X\gob1\cdots\gob1X\gend}_k
  \mapsto
  \gbeg95\gwdb8\gnl
  \gcl1\glmpb\gdnot f\gcmpb\gcmpb\grmpb\gvac2\gcl1\gnl
  \gev\gcl2\gnot\cdots\gvac1\gcl2\gvac1\glmp\gnot{\mbox{ \tiny $j^{-1}_X$} }\gcmptb\grmp\gnl
  \gvac7 \gcl1\gnl
  \gvac2\gob1X\gob1\cdots\gob1X\gvac2\gob1X\gend
\]
Then the {\bf $k^{\text{th}}$ Frobenius-Schur indicators} are given by the (usual) trace:
\[
\nu_k(X)=\Tr(E^{(k)}_X)
\]
\end{defn}
\noindent The FS indicators are an invariant of tensor categories: if $F:\C \to \mathcal{D}$ is an equivalence of tensor categories then $\nu_k(X\in \C) = \nu_k(F(X)\in \mathcal{D})$ for all positive integers $k$ and objects $X\in \C$.

We will employ the FS indicators to show that our constructions of the noncommutative near-group categories are distinct. 
We make use of the following formula of Schauenburg for the FS indicators of group-theoretical categories which have an adapted cocycle.

\begin{thm}[{\cite[Theorem 1]{S15}}] \label{fs-gt-form}
Suppose $\omega \in Z^3(G,\mathbb{C}^\times)$ is adapted cocycle for the subgroup $H$. 
Let $X_{g,\chi}\in \C(G,\omega,H,1)$ be the simple object associated with the pair $(HgH,\chi)$. 
Then:
\[
\nu_k(X_{g,\chi}) = \frac{1}{|H^g|}\sum_{\substack{r\in gH\\ r^k\in H^g}} \pi_{-k}(r) \chi(r^{-k}),
\]
where $\pi_k(x)$ is a recursively defined scalar for $x\in G$ given by $\pi_0(x) = 1$ and:
\[
\pi_{k+1}(x) = \omega(x,x^k,x)\pi_k(x).
\]
\end{thm}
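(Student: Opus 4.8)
The plan is to realize each simple object $X_{g,\chi}$ explicitly as a $(\mathbb{C} H,\mathbb{C} H)$-bimodule in $\Vect_G^\omega$ and then to evaluate $\Tr E_X^{(k)}$ directly against a combinatorial basis of $\Hom(\mathbbm{1},X_{g,\chi}^{\otimes k})$; an alternative would be to pass to the group-theoretical quasi-Hopf algebra and invoke a general central-element formula for indicators, but the bimodule computation keeps the cocycle data transparent. First I would fix the underlying graded space of $X_{g,\chi}$: it is supported on the double coset $HgH$ and is induced from the irreducible projective representation $V_\chi$ of $H^g$ with $2$-cocycle $\omega_g(h,k):=\omega(h,k,g)$. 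Since the tensor product in $\Bimod(\mathbb{C} H)$ is $\otimes_{\mathbb{C} H}$, the object $X_{g,\chi}^{\otimes k}$ is supported on degrees that are $k$-fold products of elements of $HgH$, and a bimodule map $\mathbbm{1}=\mathbb{C} H\to X_{g,\chi}^{\otimes k}$ must land in the component graded by $H$. Reducing by the left and right $H$-actions, such maps are parameterized by configurations $(r_1,\dots,r_k)$ of coset representatives with $r_1\cdots r_k\in H$, and I would show that these organize, up to the residual $H^g$-action, into a basis of $\Hom(\mathbbm{1},X_{g,\chi}^{\otimes k})$ indexed by elements $r\in gH$.

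Next I would identify the operator $E_X^{(k)}$ with a cyclic rotation of these configurations composed with the pivotal isomorphism $j^{-1}$ of $\Vect_G^\omega$. Because $\Tr E_X^{(k)}$ depends only on diagonal entries, the computation localizes at the rotation-fixed configurations, which are exactly the constant ones $r_1=\cdots=r_k=r$; this forces $r^k\in H^g$ and so produces the summation range $\{r\in gH:\,r^k\in H^g\}$. The diagonal entry at such an $r$ then factors into two pieces: the associativity constraints accumulated while transporting a constant configuration once around the circle, and the trace of the $\omega_g$-projective action of $r^{-k}$ on $V_\chi$. The latter is precisely $\chi(r^{-k})$; the former I would show telescopes, via repeated use of the $3$-cocycle identity for $\omega$ together with the explicit scalar form of $j$ on homogeneous objects, into the recursively defined scalar $\pi_{-k}(r)$ satisfying $\pi_{k+1}(x)=\omega(x,x^k,x)\pi_k(x)$. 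The normalization $1/|H^g|$ arises from the dimensions of the relevant Hom spaces, i.e. from the size of the residual stabilizer $H^g$.

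I expect the main obstacle to be the cocycle bookkeeping in this telescoping step: proving that the product of associator scalars picked up through the $k$ successive re-associations and the $k$ pivotal twists collapses exactly to $\pi_{-k}(r)$, with the correct handling of the negative powers appearing in the recursion. Here the hypothesis that $\omega$ is adapted, that is, trivial on $G\times G\times H$, is essential: it forces all the mixed terms coupling the $H$-bimodule structure to the $G$-grading to vanish, so that the projective character $\chi$ and the scalar $\pi_{-k}(r)$ separate cleanly and the whole trace reduces to the stated sum. A secondary check, which I would carry out carefully but expect to be routine, is that the constant configurations genuinely furnish the claimed basis and that the pivotal structure used is the canonical spherical one, so that $\Tr E_X^{(k)}$ is the honest categorical indicator $\nu_k(X_{g,\chi})$.
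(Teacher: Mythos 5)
This theorem is not proved in the paper at all: it is imported verbatim from Schauenburg \cite[Theorem 1]{S15} (with its erratum), so there is no internal proof to compare your attempt against. Schauenburg's own derivation is, moreover, not the direct diagrammatic computation you propose; it goes through the quasi-Hopf-algebraic model of $\C(G,\omega,H,\psi)$ and the Ng--Schauenburg central-invariant machinery for indicators --- essentially the route you mention only as an ``alternative.'' Your plan, computing $\Tr E^{(k)}_{X_{g,\chi}}$ directly on $\Hom(\mathbbm{1},X_{g,\chi}^{\otimes k})$ in the bimodule model, is therefore a genuinely different approach, closer in spirit to the hands-on indicator computations for Tambara--Yamagami and near-group categories; it keeps the cocycle data visible at the cost of having to control the combinatorics of the Hom space by hand.

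As a proof sketch it has two soft spots that are not ``routine.'' First, the assertion that the configurations organize into a basis of $\Hom(\mathbbm{1},X_{g,\chi}^{\otimes k})$ ``indexed by elements $r\in gH$'' cannot be right: that Hom space has dimension equal to the multiplicity of $\mathbbm{1}$ in $X_{g,\chi}^{\otimes k}$, which grows with $k$, whereas $gH$ is a fixed finite set; a basis must be indexed by full configuration classes together with vectors in the relevant tensor powers of $V_\chi$, and only the \emph{diagonal} of $E^{(k)}$ is governed by single elements $r$. Second, the claim that the rotation-fixed configurations are ``exactly the constant ones'' needs genuine work: configurations are defined only modulo the $\otimes_{\mathbb{C}H}$-relations (sliding elements of $H$ across tensor signs), so a class fixed by the cyclic shift need only be constant up to such moves. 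Unwinding precisely this ambiguity is where the condition $r^k\in H^g$ (rather than merely $r^k\in H$) and the normalization $1/|H^g|$ come from, so this step carries real content and cannot be deferred as a secondary check. By contrast, the telescoping of associators into $\pi_{-k}(r)$, which you single out as the main obstacle, is mechanical once the fixed-point analysis and the choice of spherical structure are pinned down. If the goal is to reproduce the theorem rather than cite it, either carry out that fixed-point bookkeeping in full or switch to Schauenburg's quasi-Hopf-algebra argument (cf.\ the construction in \cite{N05}).
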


\subsection{Group of invertible objects} 
The group $\Gamma(\C)$ of the invertible objects in a group theoretical category $\C$  was computed in 
\cite[Theorem 5.2]{GN09}. 
We recall their result in the case $\C = \C(G, \omega, H, 1)$ with an adapted cocycle $\omega$. 

Let $R$ be a set of representatives of the double cosets $H\backslash G / H$. Then we define a group $K$ isomorphic to a subgroup of $N_G(H)/H$ as follows. 
Let 
\[
K=\{g \in R \; | \; g\in N_G(H) \; \textrm{and} \; [\omega_g]=1\in H^2(H,\mathbb{C}^\times) \}
\]
as a set. 
Then for any $k_1,k_2\in K$, there exists a unique $k_3\in K$ with $k_1k_2\in k_3H$, and we define 
$k_1\cdot k_2=k_3$. 
Let $\widehat{H}=\Hom(H,\mathbb{C}^\times)$. 
Then $K$ acts on $\widehat{H}$ as ${}^k\chi(h)=\chi(h\lhd k)$. 
We define a 2-cocycle $\nu\in Z^2(K,\widehat{H})$ as follows: 
\begin{itemize}
\item Since $\omega_k$ is a coboundary for every $k\in K$, there exists $\eta_k \in C^1(H,\mathbb{C}^\times)$ satisfying 
$d\eta_k =\omega_k$. 
We set $\eta_e =1$ for $k=e$. 
\item We define $\nu \in Z^2(K,\hat{H})$ by
\[
\nu(s,t) = \frac{\eta_s (^s\eta_t)}{\eta_{s\cdot t}}, 
\]
where $s, t \in K$.
\end{itemize}

\begin{thm}[{\cite[Theorem 5.2]{GN09}}] \label{Inv} Let the notation be as above. 
Then the group $\Gamma(\C)$ fits into the following exact sequence:
\[
1 \to \widehat{H} \to \Gamma(\C) \to K \to 1
\]
with a 2-cocycle $\nu$. 
More precisely, the group $\Gamma(\C)$ is identified with the set $\widehat{H}\times K$ 
with multiplication given by:
\[
(\chi,s)\cdot (\psi,t) := (\nu(s,t)\chi \;{}^s\psi,s\cdot t)
\]
where $s,t \in K$, $\chi, \psi \in \widehat{H}$. 
\end{thm}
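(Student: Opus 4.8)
The plan is to first determine which simple objects $X_{g,\chi}$ are invertible, and then to compute the tensor product restricted to this set of invertibles directly as a relative tensor product of bimodules over $A=\mathbb{C}H$ in $\Vect_G^\omega$.

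First I would use Frobenius--Perron dimension. Since $\FPdim(X_{g,\chi}) = [H:H^g]\dim(\chi)$ (with $[H:H^g]=|H|/|H^g|$), an object is invertible precisely when $[H:H^g]=1$ and $\dim(\chi)=1$. The first condition forces $H^g=H\cap gHg^{-1}=H$, hence $gHg^{-1}=H$ and $g\in N_G(H)$, so that $\psi^g=\omega_g$ is a $2$-cocycle on all of $H$. The second condition demands a one-dimensional projective character of $H$ with cocycle $\omega_g$, which exists if and only if $\omega_g$ is a coboundary, i.e. $[\omega_g]=1\in H^2(H,\mathbb{C}^\times)$; this is exactly the defining condition on $K$. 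Fixing $\eta_g\in C^1(H,\mathbb{C}^\times)$ with $d\eta_g=\omega_g$ (and $\eta_e=1$), the one-dimensional projective characters with cocycle $\omega_g$ are exactly the set $\{\eta_g\cdot\chi : \chi\in\widehat{H}\}$, a torsor over $\widehat{H}$. This produces the bijection $\Gamma(\C)\leftrightarrow\widehat{H}\times K$ sending $(\chi,g)$ to $X_{g,\eta_g\chi}$, and simultaneously establishes, at the level of sets, the exact sequence $1\to\widehat{H}\to\Gamma(\C)\to K\to 1$ with $\Gamma(\C)\to K$ recording the normalizing double coset.

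Next I would realize each invertible explicitly and extract the group law. Writing objects of $\Vect_G^\omega$ as $G$-graded vector spaces and $A=\mathbb{C}H$ as the $H$-graded algebra, an invertible $A$-bimodule $M_{g,\chi}$ is supported on the single coset $gH=Hg$ (using $g\in N_G(H)$) with one-dimensional graded pieces; its left and right $A$-actions are governed by $\eta_g$ and $\chi$, and the bimodule associativity forces the trivialization relation $d\eta_g=\omega_g$. The group law comes from the relative tensor product $M_{g_1,\chi}\otimes_A M_{g_2,\psi}$. On gradings it is supported on $Hg_1H\cdot Hg_2H=Hg_1g_2H=Hg_3H$, where $g_3\in K$ is the unique representative with $g_1g_2\in g_3H$; this yields $g_1\cdot g_2=g_3$ in $K$. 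Balancing the two one-dimensional actions over $A$, the right action of the first factor twists $\psi$ by conjugation, producing the factor ${}^{g_1}\psi$, while comparing the composite trivialization $\eta_{g_1}\,({}^{g_1}\eta_{g_2})$ with the chosen $\eta_{g_3}$ along the associator $\omega$ produces exactly $\nu(g_1,g_2)=\eta_{g_1}({}^{g_1}\eta_{g_2})/\eta_{g_1\cdot g_2}\in\widehat{H}$. Altogether this is the asserted multiplication $(\chi,s)\cdot(\psi,t)=(\nu(s,t)\chi\,{}^s\psi,\,s\cdot t)$.

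Finally I would verify that the bookkeeping is consistent: that $\nu$ really lands in $\widehat{H}$ (its coboundary vanishes because $\eta_{g_1}\,({}^{g_1}\eta_{g_2})$ and $\eta_{g_3}$ have the same coboundary, $\omega_{g_1}\,({}^{g_1}\omega_{g_2})$ agreeing with $\omega_{g_3}$ via the adapted-cocycle identities), that $\nu\in Z^2(K,\widehat{H})$ with the $2$-cocycle identity following from the pentagon relation for $\omega$, and that a different choice of the $\eta_g$ alters $\nu$ only by a coboundary, so the extension class is well defined. The main obstacle is the relative-tensor-product computation inside the twisted category $\Vect_G^\omega$: carefully tracking how the associator $\omega$ intervenes when one balances the left and right $A$-actions and reassociates the graded components onto the coset $Hg_3H$, and confirming that the resulting discrepancy is precisely the representative $\nu(g_1,g_2)$ and not merely a cohomologous one. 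By comparison, the dimension count identifying the underlying set and the torsor structure over $\widehat{H}$ is routine.
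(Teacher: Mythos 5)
This statement is quoted verbatim from Gelaki--Naidu (\cite[Theorem 5.2]{GN09}); the paper under review offers no proof of it, only the citation, so there is no internal argument to compare yours against. Your outline is, in substance, a reconstruction of the argument in the cited source: identify the invertibles by the Frobenius--Perron count $\FPdim X_{g,\chi}=[H:H^g]\dim\chi$, observe that invertibility forces $g\in N_G(H)$ and $[\omega_g]=1$, use the trivializations $\eta_g$ to make the fiber over each $g\in K$ a $\widehat{H}$-torsor, and read off the extension cocycle from the relative tensor product of the corresponding rank-one $\mathbb{C}H$-bimodules. That is the right strategy and the bijection/exact-sequence part is routine, as you say.

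The one step you assert but do not actually establish is that $\nu(s,t)=\eta_s\,({}^s\eta_t)\,\eta_{s\cdot t}^{-1}$ is a genuine element of $\widehat{H}$, i.e.\ a homomorphism. This requires the \emph{exact} identity $\omega_s\cdot{}^s\omega_t=\omega_{s\cdot t}$ on $H$ (together with $\omega_{gh_0}=\omega_g$ for $h_0\in H$), not merely an equality of cohomology classes: a cohomological equality would only give $\nu$ up to a coboundary of a non-multiplicative cochain, and the stated multiplication rule would not be well defined as written. Both identities do follow from the $3$-cocycle condition combined with adaptedness (triviality of $\omega$ on $G\times G\times H$), but the verification is a genuine computation with the coboundary $d\omega=1$ evaluated on tuples such as $(h,k,s,t)$ and $(h,s,k\lhd s,t)$, and it is exactly the place where the hypothesis that $\omega$ is adapted is used; your phrase ``via the adapted-cocycle identities'' names the right ingredient but leaves the only nontrivial calculation of the proof unperformed. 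The same applies to your appeal to the pentagon for the $2$-cocycle identity of $\nu$. With those computations supplied, the argument is complete and consistent with the statement as recalled in the paper.
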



\section{The basic case}
\subsection{Cohomology for $S_3$} The cohomology $H^3(S_3,\mathbb{C}^\times)$ of the symmetric group $S_3$ plays a key r\^ole in the sequel. 
We provide here explicit formulae for the $3$-cocycles as computed in \cite[\S 6.3 (6.19)-(6.20)]{CGR00} (with a modification for our purpose). 
The symmetric group $S_3$ is a semi-direct product $\mathbb{Z}_3\rtimes_{-1} \mathbb{Z}_2$, 
and we uniquely parameterize group elements as $(123)^{x}(13)^y$, $x\in \{0,1,2\}$, $y\in \{0,1\}$. 
Using this convention, the group law can be written as 
\[
(x_1,y_1)\cdot(x_2,y_2) = ( [x_1+(-1)^{y_1}x_2]_3,[y_1+y_2]_2 ), 
\]
where $[-]_p$ denotes reduction modulo $p$. 

Let $\alpha:\mathbb{Z}^3\to \mathbb{C}^\times$ be a function defined by  
\[\alpha(x_1,x_2,x_3)=\exp(\frac{2\pi i(x_1+x_2-[x_1+x_2]_3)x_3}{9}).\]
Although it is a fact that the restriction of $\alpha$ to $\{0,1,2\}^3$ gives a generator of $H^3(\mathbb{Z}_3,\mathbb{C}^\times)\cong \mathbb{Z}_3$, 
we emphasize here that we are using $\alpha$ defined on $\mathbb{Z}^3$ in the below.  

We can write a representative of a generator of $H^3(S_3, \mathbb{C}^\times)\cong\mathbb{Z}_6$ as follows:
\[\omega_0((x_1,y_1),(x_2,y_2),(x_3,y_3)) =\alpha(x_1,(-1)^{y_1}x_2,(-1)^{y_1+y_2}x_3)(-1)^{y_1y_2y_3}.\]
Indeed, we can directly check the cocycle relation as 
\begin{align*}
\lefteqn{d\omega_0((x_0,y_0),(x_1,y_1),(x_2,y_2),(x_3,y_3))} \\
 &=\alpha(x_1,(-1)^{y_1}x_2,(-1)^{y_1+y_2}x_3)\\
 &\times \alpha([x_0+(-1)^{y_0}x_1]_3,(-1)^{y_0+y_1}x_2,(-1)^{y_0+y_1+y_2}x_3)^{-1}\\ 
 &\times \alpha(x_0,(-1)^{y_0}[x_1+(-1)^{y_1}x_2]_3,(-1)^{y_0+y_1+y_2}x_3) \\
 &\times \alpha(x_0,(-1)^{y_0}x_1,(-1)^{y_0+y_1}[x_2+(-1)^{y_2}x_3]_3)^{-1} \\
 &\times \alpha(x_0,(-1)^{y_0}x_1,(-1)^{y_0+y_1}x_2)\\
 &\times (-1)^{y_1y_2y_3}(-1)^{(y_0+y_1)y_2y_3}(-1)^{y_0(y_1+y_2)y_3}(-1)^{y_0y_1(y_2+y_3)}(-1)^{y_0y_1y_2}\\
 &=\exp\frac{2\pi i c}{9},
\end{align*}
with 
\begin{align*}
\lefteqn{c} \\
 &=(x_1+(-1)^{y_1}x_2-[x_1+(-1)^{y_1}x_2]_3)(-1)^{y_1+y_2}x_3\\
 &-([x_0+(-1)^{y_0}x_1]_3+(-1)^{y_0+y_1}x_2-[[x_0+(-1)^{y_0}x_1]_3+(-1)^{y_0+y_1}x_2]_3)\\
 &\times (-1)^{y_0+y_1+y_2}x_3 \\
 &+ (x_0+(-1)^{y_0}[x_1+(-1)^{y_1}x_2]_3-[x_0+(-1)^{y_0}[x_1+(-1)^{y_1}x_2]_3]_3)\\
 &\times (-1)^{y_0+y_1+y_2}x_3\\
 &-(x_0+(-1)^{y_0}x_1-[x_0+(-1)^{y_0}x_1]_3)(-1)^{y_0+y_1}[x_2+(-1)^{y_2}x_3]_3 \\
 &+(x_0+(-1)^{y_0}x_1-[x_0+(-1)^{y_0}x_1]_3)(-1)^{y_0+y_1}x_2\\
 &=(x_0+(-1)^{y_0}x_1-[x_0+(-1)^{y_0}x_1]_3)\\
 &\times (x_2+(-1)^{y_2}x_3-[x_2+(-1)^{y_2}x_3]_3)(-1)^{y_0+y_1}\\
 &\equiv 0\mod 9. 
\end{align*}
Since the restrictions of $\omega_0$ to the Sylow subgroups $\langle(123)\rangle$ and $\langle(13)\rangle$ respectively 
generate their third cohomology groups, we see that the class $[\omega_0]$ generates $H^3(S_3,\mathbb{C}^\times)\cong \mathbb{Z}_6$.  

We note that $\omega_0$ satisfies $\omega_0((x_1,y_1),(x_2,y_2),(0,y_3))=(-1)^{y_1y_2y_3}$. 

\subsection{Inflation of $3$-cocycles on $S_3$}
We naturally identify $S_3$ as a subgroup of $S_4$. 
Let $N=\{e,(12)(34),(13)(24),(14)(23)\}\cong \mathbb{Z}_2\times \mathbb{Z}_2$. 
Then $S_4$ is a semi-direct product $S_4=N\rtimes S_3$, and we denote by $\pi$ the quotient map 
$$\pi:S_4\to S_4/N\cong S_3,$$
whose restriction to $S_3$ is the identity map.  

We apply the cohomology functor $H^3(\cdot, \mathbb{C}^\times)$ to obtain the {\bf inflation map}:
\[
\inf: H^3(S_3, \mathbb{C}^\times) \to H^3(S_4, \mathbb{C}^\times)
\]
which is given by precomposition by the quotient map $\pi:S_4 \to S_3$:
\[
\inf(\omega)(x,y,z) = \omega(\pi(x),\pi(y),\pi(z)).
\]
Since $(1234)=(14)(23)(13)$, we have $\pi((1234))=(13)$. 
We can uniquely parameterize elements of $S_4$ as $(123)^x(13)^y(1234)^z$ with $x\in \{0,1,2\}$, $y\in \{0,1\}$, and $z\in \{0,1,2,3\}$. 
Then by definition
\begin{align*}
\lefteqn{\inf \omega_0((x_1,y_1,z_1),(x_2,y_2,z_2),(x_3,y_3,z_3))} \\
 &=\omega_0((x_1,[y_1+z_1]_2),(x_2,[y_2+z_2]_2),(x_3,[y_3+z_3]_2))\\
 &=\alpha(x_1,(-1)^{y_1+z_1}x_2,(-1)^{y_1+y_2+z_1+z_2}x_3)(-1)^{(y_1+z_1)(y_2+z_2)(y_3+z_3)}.
\end{align*}

\subsection{Construction of adapted cocycle}
Let $H=\langle(1234)\rangle\cong \mathbb{Z}_4$. 
Then we have $S_4=S_3\cdot H$ and $S_3\cap H=\{e\}$, that is, $(S_3,H)$ is a matched pair. 
We look for an adapted 3-cocycle of $S_4$ for $H$ cohomologous to $\operatorname{inf}\omega_0$. 

We define a function $\epsilon :S_4\to \{0,1\}$ by $\operatorname{sgn}(g)=(-1)^{\epsilon(g)}$, where 
$\operatorname{sgn} g$ is the signature of $g$. 
By definition we have $\epsilon(g_1g_2)\equiv \epsilon(g_1)+\epsilon(g_2)\mod 2$. 
Then we get 
\[\inf \omega_0(g_1,g_2,h)=(-1)^{\epsilon (g_1)\epsilon(g_2)\epsilon(h)} \]
for all $g_1,g_2\in S_4$ and $h\in H$. 
We need to find a 2-cochain $\xi\in C^2(S_4,\mathbb{C}^\times)$ satisfying 
$$(-1)^{\epsilon (g_1)\epsilon(g_2)\epsilon(h)}=d\xi(g_1,g_2,h),$$
for all $g_1,g_2\in S_4$ and $h\in H$.

Let $f_0:H\to \{1,-1\}$ be a function given by 
$$f_0((1234)^z)=\left\{
\begin{array}{ll}
1 , &\quad z=0,1  \\
-1 , &\quad z=2,3.
\end{array}
\right.
$$
Then $f_0$ satisfies $f_0(h_1h_2)=f_0(h_1)f_0(h_2)(-1)^{\epsilon(h_1)\epsilon(h_2)}$. 
Since $(S_3,H)$ is a matched pair, we can extend $f_0$ to $f:S_4\to \{1,-1\}$ by setting 
$$f(\sigma h)=f_0(h)(-1)^{\epsilon(\sigma)\epsilon(h)},$$ 
for all $\sigma\in S_3$ and $h\in H$. 
Then $f$ satisfies $f(gh)=f(g)f(h)(-1)^{\epsilon(g)\epsilon(h)}$ for all $g\in S_4$ and $h\in H$. 
We set $\xi(g_1,g_2)=f(g_2)^{\epsilon(g_1)}$ . 

\begin{lem} Let $g_i=\sigma_ih_i$ for $i=1,2,3$ with $\sigma_i\in S_3$ and $h_i\in H$. 
Then \begin{align*}
\lefteqn{d \xi(g_1,g_2,g_3)} \\
 &=(f(g_2\sigma_3)f(g_2))^{\epsilon(g_1)}(-1)^{\epsilon(g_1)\epsilon(g_2)\epsilon(h_3)} \\
 &=(f(g_2\sigma_3)f(g_2)(-1)^{\epsilon(g_2)\epsilon(\sigma_3)})^{\epsilon(g_1)}.
\end{align*}
In particular, if $g_3=h\in H$, we have 
$$d \xi (g_1,g_2,h)=(-1)^{\epsilon(g_1)\epsilon(g_2)\epsilon(h)}.$$
\end{lem}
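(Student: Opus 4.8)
The plan is to expand the coboundary directly from its definition and then exploit the two structural properties of $f$ recorded just above: its partial twisted-multiplicativity $f(gh)=f(g)f(h)(-1)^{\epsilon(g)\epsilon(h)}$ for $h\in H$, and the additivity of $\epsilon$ modulo $2$. Using the same sign convention as for $d\omega_0$ above, namely $d\xi(g_1,g_2,g_3)=\xi(g_2,g_3)\,\xi(g_1g_2,g_3)^{-1}\,\xi(g_1,g_2g_3)\,\xi(g_1,g_2)^{-1}$, I would substitute the ansatz $\xi(a,b)=f(b)^{\epsilon(a)}$. The two factors carrying $f(g_3)$ combine with total exponent $\epsilon(g_2)-\epsilon(g_1g_2)\equiv-\epsilon(g_1)\pmod 2$, and since every value of $f$ lies in $\{1,-1\}$ the signs of the exponents are irrelevant. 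This collapses the four-term product to the compact form
\[
d\xi(g_1,g_2,g_3)=\Bigl(\tfrac{f(g_2g_3)}{f(g_2)f(g_3)}\Bigr)^{\epsilon(g_1)}.
\]

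The heart of the argument is then to evaluate $f(g_2g_3)/\bigl(f(g_2)f(g_3)\bigr)$. Here I would write $g_3=\sigma_3 h_3$ and apply the twisted-multiplicativity property to $f\bigl((g_2\sigma_3)h_3\bigr)$ — this is the one step requiring care, since that property is valid only when the right-hand factor lies in $H$, so $\sigma_3$ must be kept attached to $g_2$ rather than split off on its own. This yields $f(g_2g_3)=f(g_2\sigma_3)f(h_3)(-1)^{\epsilon(g_2\sigma_3)\epsilon(h_3)}$ with $\epsilon(g_2\sigma_3)\equiv\epsilon(g_2)+\epsilon(\sigma_3)$. Using the defining formula for the extension, $f(g_3)=f_0(h_3)(-1)^{\epsilon(\sigma_3)\epsilon(h_3)}$ and $f(h_3)=f_0(h_3)$, the factor $f_0(h_3)$ cancels and the accumulated sign exponent simplifies modulo $2$ to $(\epsilon(g_2)+\epsilon(\sigma_3))\epsilon(h_3)-\epsilon(\sigma_3)\epsilon(h_3)\equiv\epsilon(g_2)\epsilon(h_3)$. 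Hence $f(g_2g_3)/f(g_3)=f(g_2\sigma_3)(-1)^{\epsilon(g_2)\epsilon(h_3)}$, and dividing by $f(g_2)=f(g_2)^{-1}$ leaves $f(g_2\sigma_3)f(g_2)(-1)^{\epsilon(g_2)\epsilon(h_3)}$.

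Raising this to the power $\epsilon(g_1)$ gives the asserted identity: keeping the sign as a separate factor produces the first displayed line, while absorbing $(-1)^{\epsilon(g_2)\epsilon(h_3)}$ into the base of the $\epsilon(g_1)$-th power produces the second. For the ``in particular'' clause I would set $g_3=h\in H$, so that $\sigma_3=e$ and $h_3=h$; then $f(g_2\sigma_3)f(g_2)=f(g_2)^2=1$ and only $(-1)^{\epsilon(g_1)\epsilon(g_2)\epsilon(h)}$ survives. This matches the value of $\inf\omega_0(g_1,g_2,h)$ computed above, so $d\xi$ cancels the obstruction of $\inf\omega_0$ on $S_4\times S_4\times H$ and the cohomologous cocycle becomes adapted for $H$, which is the point of introducing $\xi$.

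I expect the main obstacle to be bookkeeping discipline rather than any conceptual difficulty. The one genuinely delicate point is recognizing that the partial-multiplicativity relation for $f$ must not be applied to the $S_3$-part $\sigma_3$, which forces the decomposition $g_3=\sigma_3h_3$ to be introduced precisely so that the twisted homomorphism property is used only against the $H$-factor $h_3$. Everything else is additive mod-$2$ exponent arithmetic, made painless by the fact that $f$ and all intervening signs take values in $\{1,-1\}$, so that inverses and squares may be discarded freely.
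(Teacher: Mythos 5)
Your computation follows the paper's proof essentially verbatim: expand the coboundary, observe that the two $f(g_3)$-factors carry total exponent $\epsilon(g_2)-\epsilon(g_1g_2)\equiv -\epsilon(g_1)$, use that $f$ is $\{\pm1\}$-valued to collapse everything to $\bigl(f(g_2g_3)f(g_2)^{-1}f(g_3)^{-1}\bigr)^{\epsilon(g_1)}$, and then apply the twisted multiplicativity $f(gh)=f(g)f(h)(-1)^{\epsilon(g)\epsilon(h)}$ only against the $H$-factor after regrouping $g_2g_3=(g_2\sigma_3)h_3$ --- which is exactly the one delicate point you correctly isolate. This establishes the first displayed identity and the ``in particular'' clause, and it is all the paper's own proof establishes as well.

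One caveat. Your assertion that absorbing the sign into the base ``produces the second'' displayed line is not right: absorbing $(-1)^{\epsilon(g_2)\epsilon(h_3)}$ yields $\bigl(f(g_2\sigma_3)f(g_2)(-1)^{\epsilon(g_2)\epsilon(h_3)}\bigr)^{\epsilon(g_1)}$, whereas the lemma's second line carries $(-1)^{\epsilon(g_2)\epsilon(\sigma_3)}$. These differ by $(-1)^{\epsilon(g_1)\epsilon(g_2)\epsilon(g_3)}$ and are genuinely unequal --- take $g_1,g_2$ odd and $g_3=\sigma_3$ an odd element of $S_3$, so that $\epsilon(h_3)=0\neq 1=\epsilon(\sigma_3)$. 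The printed second line is evidently a misprint: it is not $d\xi$ but $(-1)^{\epsilon(g_1)\epsilon(g_2)\epsilon(g_3)}\,d\xi$, i.e.\ $\inf\omega_0\cdot d\xi$ with the $\alpha$-factor stripped off, which is precisely the expression that reappears in the subsequent display for the adapted cocycle $\omega$. Your derivation is the correct one; just do not claim it reproduces the second line as printed, and note that the substance of the lemma (the first identity and the vanishing on $S_4\times S_4\times H$) is unaffected.
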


\begin{proof} By construction, we have 
\begin{align*}
\lefteqn{d \xi(g_1,g_2,g_3)} \\
 &=\xi(g_2,g_3)\xi(g_1g_2,g_3)^{-1}\xi(g_1,g_2g_3)\xi(g_1,g_2)^{-1} \\
 &=f(g_3)^{\epsilon(g_2)-\epsilon(g_1g_2)}f(g_2g_3)^{\epsilon(g_1)}f(g_2)^{-\epsilon(g_1)}\\
 &=(f(g_2\sigma_3h_3)f(g_2)f(\sigma_3h_3))^{\epsilon(g_1)}\\
 &=(f(g_2\sigma_3)f(h_3)(-1)^{\epsilon(g_2\sigma_3)\epsilon(h_3)}f(g_2)f(h_3)(-1)^{\epsilon(\sigma_3)\epsilon(h_3)})^{\epsilon(g_1)}\\
 &=(f(g_2\sigma_3)f(g_2))^{\epsilon(g_1)}(-1)^{\epsilon(g_1)\epsilon(g_2)\epsilon(h_3)}.
\end{align*}
\end{proof}

We define an adapted cocycle $\omega\in Z^3(S_4,\mathbb{C}^{\times})$ by 
$$\omega(g_1,g_2,g_3)=\inf \omega_0(g_1,g_2,g_3)d \xi(g_1,g_2,g_3).$$
Let $g_i=(123)^{x_i}(13)^{y_i}(1234)^{z_i}$, $\sigma_i=(123)^{x_i}(13)^{y_i}$, and $h_i=(1234)^{z_i}$. 
The above lemma shows 
\begin{align*}
\lefteqn{\omega(g_1,g_2,g_3)} \\
 &=\alpha(x_1,(-1)^{y_1+z_1}x_2,(-1)^{y_1+y_2+z_1+z_2}x_3)(f(g_2\sigma_3)f(g_2)(-1)^{\epsilon(g_2)\epsilon(\sigma_3)})^{\epsilon(g_1)}.
\end{align*}

\subsection{Structure of $\C(S_4,\omega^l,H,1)$}
Let $H$ and $\omega$ be as in the previous subsection. 
We determine the structure of the group-theoretical fusion categories $\C_{1,l}=\C(S_4,\omega^l,H,1)$ for $l\in\{0,1,2,3,4,5\}$. 

Let $\gamma_0=e$, $\gamma_1=(12)(34)$, $\gamma_2=(123)$. 
Then we have the double coset decomposition
$$S_4=\bigsqcup_{i=0}^2 H\gamma_iH,$$
and
\begin{align*}
H\gamma_0H &= H,  \\
H\gamma_1H &=\gamma_1H= \{(13), (24), (12)(34), (14)(23)\}, \\
H\gamma_2H &= \{(1243),(1324),(1342),(1423),\\
    & \quad (123),(124),(132),(134),(142),(143),(234),(243),\\
    & \quad (12),(14),(23),(34) \}.
\end{align*}
The normalizer $N_{S_4}(H)$ of $H$ is 
$$H\rtimes_{-1} \langle\gamma_1 \rangle\cong \mathbb{Z}_4\rtimes_{-1}\mathbb{Z}_2\cong D_8.$$
and we have 
\[H^{\gamma_0}=H^{\gamma_1} = H, \quad H^{\gamma_2} = \{e\}.\]
Since  $\gamma_1=(12)(34)=(13)(1234) $ we can compute the Schur multiplier $\omega_{\gamma_1}\in Z^2(H^{\gamma_1},\mathbb{C}^\times)$ as  
\begin{align*}
\omega_{\gamma_1}(h_1,h_2)&=\omega(h_1,h_2,(13)(1234))=(f(h_2(13))f_0(h_2)(-1)^{\epsilon(h_2)})^{\epsilon(h_1)}  \\
 &=(f((13)h_2^{-1})f_0(h_2)(-1)^{\epsilon(h_2)})^{\epsilon(h_1)}=(f_0(h_2^{-1})f_0(h_2))^{\epsilon(h_1)}\\
 &=((-1)^{\epsilon(h_2^{-1})\epsilon(h_2)})^{\epsilon(h_1)}\\
 &=(-1)^{\epsilon(h_1)\epsilon(h_2)}=\frac{f_0(h_1)f_0(h_2)}{f_0(h_1h_2)},
\end{align*}
which shows $\omega_{\gamma_1}=df_0$. 
We let $\chi$ be one of the generators of $\widehat{H}\cong \mathbb{Z}_4$. 
Then the simple objects of $\C_{1,l}$ are 
$$X_{\gamma_0,1}, X_{\gamma_0,\chi},  X_{\gamma_0,\chi^2}, X_{\gamma_0,\chi^3},
X_{\gamma_1,f_0^l}, X_{\gamma_1,\chi f_0^l},X_{\gamma_1 \chi^2 f_0^l},X_{\gamma_1,\chi^3 f_0^l},X_{\gamma_2,1}.$$

Even though we do not know the fusion rules yet, by dimension considerations we see that the group of invertible objects $\Gamma(\C_{1,l})$ is  
\[
\{X_{\gamma_0,\chi^i}\}_{i=0}^3\cup\{X_{\gamma_1,\chi^i f_0^l}\}_{i=0}^3,
\]
and $X_{\gamma_2,1}$ with $\FPdim X_{\gamma_2,1} = 4$ is the only non-invertible object, hence corresponding to $\rho$.  
Thus $\C_{1,l}$ is a near-group category. 

\begin{thm}\label{basic} Let $\C_{1,l}=\C(S_4,\omega^l,H,1)$ be as above. 
\begin{itemize}
\item[$(1)$] For the second and third FS indicators of $X_{\gamma_2,1}$, we have 
$$\nu_2(X_{\gamma_2,1})=(-1)^l,$$
$$\nu_3(X_{\gamma_2,1})=2e^{\frac{-2\pi li}{3}}.$$
\item[$(2)$] 
The group $\Gamma(\C_{1,l})$ of the invertible objects in $\C_{1,l}$ is isomorphic to the dihedral group $D_8$ for even $l$, 
and the quaternion group $Q_8$ for odd $l$. 
\end{itemize}
In particular, the six group theoretical fusion categories $\C_{1,l}$, $l=0,1,2,3,4,5$, 
are pairwise inequivalent as pivotal fusion categories.
\end{thm}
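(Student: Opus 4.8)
The plan is to prove part $(1)$ by a direct application of Schauenburg's formula (Theorem \ref{fs-gt-form}), part $(2)$ by unpacking the Gelaki--Naidu description (Theorem \ref{Inv}), and then to deduce the final ``in particular'' statement from the fact that the Frobenius--Schur indicators are invariants of pivotal fusion categories. Throughout I would work with the explicit adapted cocycle $\omega$ constructed in the previous subsection, replacing $\omega$ by $\omega^l$ for the category $\C_{1,l}$.

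For part $(1)$ I would first write down the left coset $\gamma_2H=(123)H$ explicitly (four elements). Since $H^{\gamma_2}=\{e\}$ and $\chi=1$ is trivial, Schauenburg's formula collapses to $\nu_k(X_{\gamma_2,1})=\sum_{r\in\gamma_2H,\ r^k=e}\pi_{-k}(r)$, and, extending the recursion to negative indices, $\pi_{-k}(r)=\prod_{j=1}^k\omega^l(r,r^{-j},r)^{-1}$. Listing the elements of $\gamma_2H$, one checks that exactly one of them, a transposition, squares to $e$ (the only contribution to $\nu_2$), and exactly two of them, both $3$-cycles, cube to $e$ (the contributions to $\nu_3$); the remaining element is a $4$-cycle and contributes to neither. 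The evaluation of $\pi_{-k}$ simplifies greatly because for a $3$-cycle $r$ one has $\epsilon(r)=0$, so the $f$-part of $\omega(r,-,-)$ is trivial and only the $\alpha$-factor survives; evaluating $\alpha$ at the relevant triples yields a primitive cube root of unity, giving $\pi_{-3}(r)=e^{-2\pi il/3}$ for each of the two $3$-cycles, hence $\nu_3(X_{\gamma_2,1})=2e^{-2\pi il/3}$. For the transposition the only nontrivial factor is $\omega(r,r,r)=-1$ (computed directly from the formula, the $\omega(r,e,r)$ factor being $1$), so $\pi_{-2}(r)=(-1)^l$ and $\nu_2(X_{\gamma_2,1})=(-1)^l$.

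For part $(2)$ I would invoke Theorem \ref{Inv}. Among the double-coset representatives $\gamma_0,\gamma_1,\gamma_2$, only $\gamma_0$ and $\gamma_1$ lie in $N_{S_4}(H)\cong D_8$, and $[\omega^l_{\gamma_1}]=[(df_0)^l]=1$, so $K=\{e,\gamma_1\}\cong\mathbb{Z}_2$ while $\widehat H\cong\mathbb{Z}_4$; in particular $|\Gamma(\C_{1,l})|=8$, matching the eight invertible objects. Since conjugation by $\gamma_1$ inverts $H$, the action of $K$ on $\widehat H$ is by inversion, so $\Gamma(\C_{1,l})$ is nonabelian. The crucial point is whether a lift of $\gamma_1$ has order $2$ or $4$: because the action is by inversion, $\chi\cdot{}^{\gamma_1}\chi=\mathbbm{1}$ for every $\chi\in\widehat H$, whence $(\chi,\gamma_1)^2=(\nu(\gamma_1,\gamma_1),e)$ independently of $\chi$. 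Thus $\Gamma(\C_{1,l})\cong D_8$ exactly when $\nu(\gamma_1,\gamma_1)$ is the trivial character and $\cong Q_8$ exactly when it is the order-two character. Taking $\eta_{\gamma_1}=f_0^l$ (so $d\eta_{\gamma_1}=\omega^l_{\gamma_1}$ and $\eta_e=1$), I compute $\nu(\gamma_1,\gamma_1)=f_0^l\cdot{}^{\gamma_1}(f_0^l)$, and using $f_0(h)f_0(h^{-1})=(-1)^{\epsilon(h)}$ together with $h\lhd\gamma_1=h^{-1}$ this evaluates to the character $(1234)^z\mapsto(-1)^{lz}$, which is trivial for even $l$ and of order two for odd $l$, yielding $D_8$ and $Q_8$ respectively.

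Finally, the ``in particular'' statement follows since $\nu_2$ and $\nu_3$ are invariants of pivotal fusion categories and any equivalence must send the unique non-invertible object $X_{\gamma_2,1}$ to its counterpart: the pair $(\nu_2,\nu_3)=\bigl((-1)^l,\,2e^{-2\pi il/3}\bigr)$ determines $l$ modulo $2$ and modulo $3$, hence modulo $6$ by the Chinese remainder theorem, so the six values $l=0,\dots,5$ give six pairwise distinct indicator data and the categories are pairwise inequivalent. I expect the main obstacle to be purely computational bookkeeping: correctly expressing each relevant permutation in the parameterization $(123)^x(13)^y(1234)^z$ and then carefully evaluating both the $\alpha$- and $f$-parts of $\omega$ and the Gelaki--Naidu cocycle $\nu$. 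The conceptual simplifications ($\epsilon=0$ for $3$-cycles trivializing the $f$-part, and the inversion action forcing $\chi\cdot{}^{\gamma_1}\chi=\mathbbm{1}$) are precisely what keep these evaluations tractable.
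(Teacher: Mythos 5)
Your proposal is correct and follows essentially the same route as the paper: Schauenburg's formula applied to $\gamma_2H=\{(123),(1342),(243),(14)\}$ with the $f$-part dropping out for the even permutations, and the Gelaki--Naidu extension with $\nu(\gamma_1,\gamma_1)=\chi^{2l}$ distinguishing $D_8$ from $Q_8$. The only (harmless) differences are presentational: you identify $D_8$ versus $Q_8$ by noting every lift of $\gamma_1$ squares to $\nu(\gamma_1,\gamma_1)$ rather than writing the presentation explicitly, and you spell out the Chinese-remainder step that the paper leaves implicit.
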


\begin{proof} 
(1) Theorem \ref{fs-gt-form} implies 
$$\nu_k(X_{\gamma_2,1})= \sum_{\substack{r\in (123)H\\ r^k=e}} \pi_{-k}(r)= \sum_{\substack{r\in (123)H\\ r^k=e}}\prod_{j=1}^k \omega(r,r^{-j},r)^{-l}.$$
Since $(123)H = \{ (123), (1342), (243), (14) \}$, we have 
$$\nu_2(X_{(12),1})=\omega((14),(14),(14))^{-l}$$ and 
\begin{align*}
\lefteqn{\nu_3(X_{(12),1})} \\
 &=\omega((123),(123)^{-1},(123))^{-l}\omega((123),(123)^{-2},(123))^{-l}\\
 &+\omega((243),(243)^{-1},(243))^{-l}\omega((243),(243)^{-2},(243))^{-l} \\
 &=\omega((123),(123)^2,(123))^{-l}\omega((123),(123),(123))^{-l}\\
 &+\omega((243),(234),(243))^{-l}\omega((243),(243),(243))^{-l} \\
\end{align*}

Since $(14)=(123)(1234)^3$, we get 
\begin{align*}
\lefteqn{\omega((14),(14),(14))} \\
 &=\alpha(1,-1,1)f((14)(123))f((123)(1234)^3)=f((1234))f((1234)^3)=-1,
\end{align*}
and $\nu_2(X_{\gamma_2,1})=(-1)^l$. 

We have 
$$\omega((123),(132),(123))\omega((123),(123),(123))=\alpha(1,1,1)\alpha(1,2,1)=e^{\frac{2\pi i}{3}}.$$
Since $(243)=(123)(1234)^2$ and $(234)=(123)^2(13)(1234)$, we get 
\begin{align*}
\lefteqn{\omega((243),(234),(243))\omega((243),(243),(243))} \\
 &=\omega((123)(1234)^2,(123)^2(13)(1234),(123)(1234)^2)\\
 &\times \omega((123)(1234)^2,(123)(1234)^2,(123)(1234)^2) \\
 &=\alpha(1,2,1)\alpha(1,1,1) \\
 &=e^{\frac{2\pi i}{3}}.
\end{align*}
Thus we get  $\nu_3(X_{(12),1})=2e^{\frac{-2\pi l i}{3}}$.

(2) Since $N_{S_4}(H)=H\rtimes \langle \gamma_1\rangle$ and $\omega_{\gamma_1}=df_0$, we can identify 
$K$ in Theorem \ref{Inv} with $\{e,\gamma_1\}$, and get $\eta_{\gamma_1}(h)=f_0(h)^l$. 
Thus 
$$\nu(\gamma_1,\gamma_1)(h)=(f_0(h)f_0(h^{-1}))^l=\chi(h)^{2l}.$$
This means 
$$\Gamma(\C_{1,l})=\langle \chi,\widetilde{\gamma_1}|\;\chi^4=1,\; \widetilde{\gamma_1}^2=\chi^{2l},\; \widetilde{\gamma_1}\chi=\chi^{-1}\widetilde{\gamma_1}\rangle,$$ 
which is isomorphic to $D_8$ for even $l$, and to $Q_8$ for odd $l$. 
\end{proof}

\begin{rmk}
The category $\C_{1,0}$ is equivalent to the category of representations of the bismash product Hopf algebra 
$\mathbb{C}^{S_3} \# \mathbb{C}\mathbb{Z}_4$. This bismash product is the trivial case of a cleft extension Hopf algebra $\mathbb{C}^{S_3} \to H \to \mathbb{C}\mathbb{Z}_4$, analogous to the semidirect product of groups.
Such an extension gives a Singer pair structure for the Hopf algebras $(\mathbb{C}\mathbb{Z}_4, \mathbb{C}^{S_3})$. Equivalence classes of cleft extension Hopf algebras giving a fixed Singer pair structure form an abelian group denoted $\Opext(\mathbb{C}\mathbb{Z}_4, \mathbb{C}^{S_3})$. It was shown in \cite[Theorem 4.1]{M97} that $\Opext(\mathbb{C}\mathbb{Z}_4, \mathbb{C}^{S_3})=0$, hence this is the unique such Hopf algebra extension (up to equivalence of extensions) associated to the Singer pair $(\mathbb{C}\mathbb{Z}_4, \mathbb{C}^{S_3})$.

In a similar way, the other categories $\C_{1,l}$, $l\neq 0$ arise as the representation categories of cleft extension quasi-Hopf algebras; see  
\cite[Theorem 4.4]{N05} for an explicit construction. The group of equivalence classes of quasi-bialgebra extensions associated to the Singer pair $(\mathbb{C}\mathbb{Z}_4, \mathbb{C}^{S_3})$ is denoted $\Opext'(\mathbb{C}\mathbb{Z}_4, \mathbb{C}^{S_3})$. (See \cite{M02} for details.) 

The preceding results imply that $\mathbb{Z}_6$ is a subgroup of $\Opext'(\mathbb{C}\mathbb{Z}_4, \mathbb{C}^{S_3})$. Moreover, we may apply the Kac exact sequence associated with $\Opext'$ from \cite[Theorem 4.13]{M02} to see that
\[
0\cong H^2(\mathbb{Z}_4,\mathbb{C}^\times) \to \Opext'(\mathbb{C}\mathbb{Z}_4, \mathbb{C}^{S_3}) \to H^3(S_4, \mathbb{C}^\times) \to H^3(\mathbb{Z}_4, \mathbb{C}^\times) \cong \mathbb{Z}_4.
\]
Since $H^3(S_4, \mathbb{C}^\times) \cong \mathbb{Z}_6 \times \mathbb{Z}_4$ we have
\[
\Opext'(\mathbb{C}\mathbb{Z}_4, \mathbb{C}^{S_3}) \cong \mathbb{Z}_6
\]
\end{rmk}

\begin{rmk}
Dually, one may consider the group of equivalence classes of {\it coquasi}-bialgebra extensions associated to the Singer pair $(\mathbb{C}\mathbb{Z}_4, \mathbb{C}^{S_3})$, which is denoted $\Opext''(\mathbb{C}\mathbb{Z}_4, \mathbb{C}^{S_3})$.  It was shown in \cite[Example 5.2]{M03} that
\[
\Opext''(\mathbb{C}\mathbb{Z}_4, \mathbb{C}^{S_3})\cong \mathbb{Z}_4
\]
Thus we observe that
\[
\Opext'(\mathbb{C}\mathbb{Z}_4, \mathbb{C}^{S_3}) \oplus \Opext''(\mathbb{C}\mathbb{Z}_4, \mathbb{C}^{S_3}) \cong H^3(S_4, \mathbb{C}^\times),
\]
This observation indicates an interesting relationship between the groups $\Opext'$ and $\Opext''$ for other $\Opext$-trivial Singer pairs.
\end{rmk}

\section{General case}
We use the same notation as in the previous section. 

Recall that $S_3$ is isomorphic to $SL(2,\F_2)$, and $S_4=N\rtimes S_3$ is isomorphic to the affine group
$\F_2^2\rtimes SL(2,\F_2)$, which is identified with 
$$\left\{\left(
\begin{array}{ccc}
a &b &v  \\
c &d &w  \\
0 &0 &1 
\end{array}
\right)\Bigg|\;  v,w\in \F_2,\;\left(
\begin{array}{cc}
a &b  \\
c &d 
\end{array}
\right)\in SL(2,\F_2)\right\}.$$
This expression of $S_4$ suggests possible generalization of our previous construction. 
Let 
\[G_n = \left\{ \left(\begin{matrix}
  a & b & v \\
  c & d & w \\
  0_n^T & 0_n^T & I_n
 \end{matrix}\right) \in SL_{n+2}(\mathbb{F}_2) \;  \Bigg| \; v, w \in \mathbb{F}_2^n,\; \left(\begin{matrix}
 a & b \\
 c & d \\
 \end{matrix}\right)\in SL_2(\mathbb{F}_2) \right\} 
\]
where we let $0_n=(0,0,\ldots, 0)\in \mathbb{F}_2^n$ and $I_n\in SL_n(\mathbb{F}_2)$ is the identity matrix.
Then $G_n\cong \F_2^{2n}\rtimes S_3$, and again we can inflate the 3-cocycle $\omega_0\in Z^3(S_3,\mathbb{C}^\times)$ 
to $G_n$. 
The first named author conjectured in \cite{I17} that with an appropriate choice of a subgroup $H_n\leq G_n$, the same construction 
as in the previous section, using $G_n$ instead of $S_4$, exhausts general noncommutative near-group categories, 
which we are going to prove now. 
Note that $G_n$ is a semi-direct product $\F_2^{2n-2}\rtimes S_4$, where the action of $S_4$ on $\F_2^{2n-2}$ 
is given through $\pi:S_4\to S_3\cong SL(2,\F_2)$. 
For our purpose, it is more convenient to directly work on the latter expression as we already have an adapted cocycle $\omega$ of $S_4$ 
for the subgroup $H$. 
For this reason, we start with our argument by giving an alternative definition of $G_n$. 

Let $V_n=\F_2^{n-1}$.  
We define an action of $S_3$ on $V_n\oplus V_n$ as follows:
\begin{align*}
(12)\cdot (v,w)&=(w,v),\\ 
(13)\cdot(v,w)&=(v+w,w),\\
(23)\cdot(v,w)&=(v,v+w),\\
(123)\cdot (v,w)&=(v+w,v),\\ 
(132)\cdot (v,w)&=(w,v+w),
\end{align*}
and extend it to an $S_4$-action through $\pi$. 
We set $G_n=(V_n\oplus V_n)\rtimes S_4$, which is $(V_n\oplus V_n)\times S_4$ as a set with multiplication 
$$((v_1,v_2),g)((w_1,w_2),h)=((v_1,v_2)+g\cdot (w_1,w_2),gh).$$
We denote by $p$ the projection $p:G_n\to S_4$ onto the second components. 
For simplicity, we denote $((v_1,v_2),e)$ by $(v_1,v_2)$ and $((0,0),g)$ by $g$. 

Note that $(v,0_{n-1})$ commutes with $h$ for every $v\in V_n$ and $h\in H$ as $(13)\cdot(v,0_{n-1})=(v,0_{n-1})$.  
Let $\omega_n(x,y,z)=\omega(p(x),p(y),p(z))$, and let \[
H_n=(V_n\oplus \{0_{n-1}\}) H\cong \mathbb{Z}_2^{n-1} \times \mathbb{Z}_4.
\]
Then $\omega_n$ is an adapted 3-cocycle of $G_n$ for $H_n$. 
We then define 
\[
\C_{n,l}=\C(G_n,\omega_n^l,H_n,1)\]
for $l=0,1,2,3,4,5$. 

Note that $\pi(\gamma_1)=e$, and $\gamma_1$ commutes with $(v_1, v_2)$ for all $v_1,v_2\in V_n$. 
Thus $K_n:=(\{0_{n-1}\}\oplus V_n)\{e,\gamma_1\}$ is a subgroup of $G_n$ isomorphic to $\mathbb{Z}_2^n$, and we have 
$N_{G_n}(H_n)=(V_n\oplus V_n)N_{S_4}(H)=H_n\rtimes K_n$. 
We have $(\omega_n)_{(0_{n-1},v)}=1$ and 
$$(\omega_n)_{(0_{n-1},v)\gamma_1}(h_1,h_2)=\omega_{\gamma_1}(p(h_1),p(h_2))=d((f_0\circ p)^l)(h_1,h_2)$$ 
for $h_1,h_2\in H_n$.  

We claim 
$$G_n=H_n\gamma_2H_n\sqcup \bigsqcup_{k\in K_n}H_nkH_n.$$
Indeed, direct computation shows $(H_n)^{\gamma_2}=\gamma_2H_n\gamma_2^{-1}\cap H_n=\{e\}$, which implies $|H_n\gamma_2H_n|=|H_n|^2=2^{2n+2}$. 
Since $H_nkH_n=kH_n$ for $k\in K_n$, the cardinality of the right-hand side is $2^{2n+2}+|H_n||K_n|=2^{2n+1}\cdot3$, which coincides with $|G_n|$. 
From the claim, we see that 
$$\Gamma(\C_{n,l})=\{X_{(0_{n-1},v),\tau}\}_{\tau\in \widehat{H_n},\;v\in V_n}\sqcup\{X_{(0_{n-1},v)\gamma_1,\tau f_0^l\circ p}\}_{\tau\in \widehat{H_n},\;v\in V_n}$$
and $X_{\gamma_2,1}$ is the only non-invertible simple object. 
Thus $\C_{n,l}$ is a near-group category. 
We have $\FPdim X_{\gamma_2,1}=2^{n+1}$.

\begin{thm}\label{general} Let $\C_{n,l}=\C(G_n,\omega_n^l,H_n,1)$ be as above. 
\begin{itemize}
\item[$(1)$] For the second and third FS indicators of $X_{\gamma_2,1}$, we have 
$$\nu_2(X_{\gamma_2,1})=(-1)^l,$$
$$\nu_3(X_{\gamma_2,1})=2^ne^{\frac{-2\pi li}{3}}.$$
\item[$(2)$] 
For even $l$, $\Gamma(\C_{n,l})$ is isomorphic to the central product of $n$ copies of $D_8$.  
For odd $l$, it is isomorphic to the central product of $Q_8$ and $n-1$ copies of $D_8$. 
\end{itemize}
In particular, the six group theoretical fusion categories $\C_{n,l}$, $l=0,1,2,3,4,5$, 
are pairwise inequivalent as pivotal fusion categories.
\end{thm}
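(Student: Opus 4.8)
The plan is to reduce both assertions to the basic case $n=1$ (Theorem~\ref{basic}), exploiting the two structural facts on which $\C_{n,l}$ is built: the semidirect decomposition $G_n=(V_n\oplus V_n)\rtimes S_4$, and the fact that $\omega_n=\omega\circ(p\times p\times p)$ is inflated from $S_4$ along $p\colon G_n\to S_4$. The inflation is what makes the reduction work, because every quantity produced by Schauenburg's and Gelaki--Naidu's formulas will turn out to depend on group elements only through their images under $p$ and $\pi$.

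For part (1) I would begin from Schauenburg's formula (Theorem~\ref{fs-gt-form}). Since $H_n^{\gamma_2}=\{e\}$, it collapses to $\nu_k(X_{\gamma_2,1})=\sum_{r\in\gamma_2H_n,\ r^k=e}\pi_{-k}(r)$, and because $\omega_n$ is inflated, $\pi_{-k}(r)$ depends only on $p(r)\in S_4$ and coincides with the corresponding factor computed in the basic case. Writing a general element of $\gamma_2H_n$ as $r=(u,s)$ with $s=p(r)\in(123)H$ and $u=(v,v)\in V_n\oplus V_n$, the relation $r^k=e$ is equivalent to $s^k=e$ together with the module equation $(1+s+\cdots+s^{k-1})u=0$ in $V_n\oplus V_n$. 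For $k=2$ the only $s\in(123)H$ with $s^2=e$ is $(14)$, which acts through $\pi((14))=(23)$; I would check that the module equation then forces $u=0$, so a single term survives and reproduces $\nu_2=(-1)^l$ verbatim. For $k=3$ the contributing elements are $s=(123)$ and $s=(243)$, both acting through $\pi$ as $(v,w)\mapsto(v+w,v)$, so that $1+s+s^2$ annihilates the entire diagonal; every $u=(v,v)$ satisfies the module equation, and each of the two basic-case summands is therefore repeated $|V_n|=2^{n-1}$ times, giving $\nu_3=2^{n-1}\cdot 2e^{-2\pi li/3}=2^ne^{-2\pi li/3}$.

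For part (2) I would apply Gelaki--Naidu (Theorem~\ref{Inv}) with the data recorded just before the theorem: $K=K_n\cong\mathbb{Z}_2^n$, $\widehat{H_n}\cong\mathbb{Z}_2^{n-1}\times\mathbb{Z}_4$, and $\eta_{(0_{n-1},v)}=1$, $\eta_{(0_{n-1},v)\gamma_1}=(f_0\circ p)^l$. Writing $\chi_0$ for a generator of $\widehat{H}$ and $\tau_1,\dots,\tau_{n-1}$ for a basis of $\widehat{V_n}$, the group $\Gamma(\C_{n,l})=\widehat{H_n}\times K$ has order $2^{2n+1}$. The key computation is the commutator of a character $(\chi,e)$ with a lift $(\mathbf 1,t)$, which by the multiplication rule equals $\chi\cdot({}^t\chi)^{-1}$; I would evaluate the $K$-action to find that conjugation by $\gamma_1$ inverts $\chi_0$ and fixes each $\tau_i$, while conjugation by $(0_{n-1},v)$ fixes each $\tau_i$ and multiplies $\chi_0$ by a power of $\chi_0^2$ governed by the value at $v$ of the $V_n$-part of $\chi$. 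These pairings exhibit the commutator form on $\Gamma(\C_{n,l})/\langle\chi_0^2\rangle$ as a nondegenerate symplectic form of rank $2n$ with values in $\langle\chi_0^2\rangle\cong\mathbb{Z}_2$; hence $\Gamma(\C_{n,l})$ is extra-special, with center and commutator subgroup both $\langle\chi_0^2\rangle$, and splits as a central product of the block $\langle\chi_0,\widetilde{\gamma_1}\rangle$ with the $n-1$ blocks $\langle\tau_i,\widetilde{(0_{n-1},e_i)}\rangle$. Computing squares, the latter blocks are copies of $D_8$, while $\widetilde{\gamma_1}^2=\chi_0^{2l}$ makes the first block $Q_8$ precisely when $l$ is odd and $D_8$ otherwise, which gives the two claimed isomorphism types.

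I expect part (2) to be the main obstacle: the delicate point is the exact evaluation of the cocycle $\nu$ and the $K$-action on $\widehat{H_n}$, and then the verification that the resulting commutator pairing is nondegenerate of rank $2n$ so that $\Gamma(\C_{n,l})$ is genuinely a central product of $n$ blocks with exactly one possibly-quaternionic factor. Finally, the concluding statement follows because the Frobenius--Schur indicators are invariants of pivotal fusion categories: the pairs $(\nu_2,\nu_3)=((-1)^l,2^ne^{-2\pi li/3})$ are pairwise distinct for $l=0,1,2,3,4,5$, since the sign has period $2$ and the phase period $3$, hence combined period $6$; therefore the six categories cannot be pairwise equivalent.
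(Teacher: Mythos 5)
Your part (1) is essentially the paper's own argument: the paper likewise lists $(123)H_n=\bigcup_{v\in V_n}\{(v,v)(123),(v,v)(1342),(v,v)(243),(v,v)(14)\}$, checks that $(14)$ is the unique element of order $2$ there (because $((v,v)(14))^2$ is a nontrivial translation unless $v=0$) while all $2\cdot 2^{n-1}=2^n$ elements $(v,v)(123)$, $(v,v)(243)$ have order $3$, and then reuses the basic-case evaluation of the $\pi_{-k}$ factors, which depend only on $p(r)$ because $\omega_n$ is inflated. Your ``module equation'' $(1+s+\cdots+s^{k-1})u=0$ is exactly this count, and the concluding inequivalence argument from the pair $(\nu_2,\nu_3)$ is also the paper's.

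In part (2) your route is again the paper's (identify $K$ with $K_n$, compute $\nu$ and the $K_n$-action on $\widehat{H_n}$, and split $\Gamma(\C_{n,l})$ into the $n$ blocks $\langle\chi_0,\widetilde{\gamma_1}\rangle$ and $\langle\tau_i,\widetilde{(0_{n-1},e_i)}\rangle$), but your stated action of the translation part of $K_n$ is backwards, and as written it would defeat the argument. The correct computation (carried out in the paper) is ${}^{(0_{n-1},w)}\chi=\chi$ and ${}^{(0_{n-1},w)}\mu_{v^*}=\mu_{v^*}\,(\chi^{2})^{\langle w,v^*\rangle}$, while $\gamma_1$ inverts $\chi$ and fixes every $\mu_{v^*}$; equivalently, conjugating a general character by $(0_{n-1},w)$ multiplies it by $\chi_0^{2\langle w,b\rangle}$ where $b$ is its $V_n$-part. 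You claim instead that $(0_{n-1},v)$ \emph{fixes} each $\tau_i$ and moves $\chi_0$. Taken literally, that would place all the $\tau_i$ in the radical of your commutator form (they would commute with every generator), so the form would be degenerate of rank at most $n+1$, $\Gamma(\C_{n,l})$ would not be extra-special, and the blocks $\langle\tau_i,\widetilde{(0_{n-1},e_i)}\rangle$ would be abelian rather than copies of $D_8$ --- contradicting the decomposition you then assert. Since you yourself identify this action as the delicate point of the proof, the computation must be done correctly: what makes each $\langle\tau_i,\widetilde{(0_{n-1},e_i)}\rangle$ a $D_8$ is precisely that $\widetilde{(0_{n-1},e_i)}$ and $\tau_i$ have commutator $\chi_0^{2}$ while $\widetilde{(0_{n-1},e_i)}$ commutes with $\chi_0$ and with $\tau_j$ for $j\neq i$. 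With that correction (and the square computations $\mu_i^2=f_i^2=e$, $\widetilde{\gamma_1}^{\,2}=\chi_0^{2l}$, plus the order count $|\Gamma(\C_{n,l})|=2^{2n+1}$), the rest of your argument matches the paper.
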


\begin{proof} 
(1) We have 
\begin{align*}
\lefteqn{(123)H_n} \\
 &=(123)\{(v,0_{n-1})h\}_{v\in V_n,\;h\in H} \\
 &= \{(v,v)(123)h\}_{v\in V_n,\;h\in H}\\
 &=\bigcup_{v\in V_n}\{(v,v)(123), (v,v)(1342),(v,v) (243),(v,v) (14) \}.
\end{align*}
Since $\pi((14))=(23)$, we have $((v,v)(14))^2=(v,0_{n-1})$, 
and $(14)$ is the only order 2 element in $(123)H_n$. 
Thus the same computation as in the proof of Theorem \ref{basic} shows $\nu_2(X_{\gamma_2,1})=(-1)^l$. 

Since $\pi((123))=\pi((243))=(123)$, we see that the set of order 3 elements in $(123)H_n$ is 
$$\bigcup_{v\in V_n}\{(v,v)(123),(v,v) (243) \}.$$
Thus the same computation as in the proof of Theorem \ref{basic} shows $\nu_3(X_{(12),1})=2^ne^{\frac{-2\pi l i}{3}}$.

(2) 
Recall that $\chi$ is one of the generator of $\widehat{H}$. 
We regard $\chi$ as an element of $\widehat{H_n}$ by setting $\chi((v,0_{n-1})h)=\chi(h)$.  
For $v^*\in V_n^*$, we define $\mu_{v^*}\in \widehat{H_n}$ as $\mu_{v^*}((v,0_{n-1})h)=(-1)^{\langle v,v^*\rangle}$. 

We can identify $K_n$ with $K$ in Theorem \ref{Inv}, and get $\eta_{(0_{n-1},w)}=1$ and $\eta_{(0_{n-1},w)\gamma_1}(h)=f_0(\rho(h))^l$. 
Thus 
$$\nu((0_{n-1},w_1)\gamma_1^r,(0_{n-1},w_2)\gamma_1^s)=\left\{
\begin{array}{ll}
 1, &\quad (r,s)\neq (1,1) \\
 \chi^{2l}, &\quad (r,s)=(1,1).
\end{array}
\right.$$
The group $\Gamma(\C_{1,l})$ is identified with a group generated by $\widehat{H_n}$ and symbols $\{\widetilde{k}\}_{k\in K_n}$ with the relations 
$\widetilde{k_1}\widetilde{k_2}=\nu(k_1,k_2)\widetilde{k_1k_2}$ and $\widetilde{k}\tau={}^k\tau\widetilde{k}$ 
for all $k_1,k_2,k\in K_n$ and $\tau\in \widehat{H_n}$. 
Since
\begin{align*}
\lefteqn{{}^{(0_{n-1},w)}\tau((v,0_{n-1})(1234)^z)} \\
 &=\tau((0_{n-1},-w)(v,0_{n-1})(1234)^z(0_{n-1},w))=\tau((v+zw,0_{n-1})(1234)^z), 
\end{align*}
and ${}^{\gamma_1}\tau((v,0_{n-1})h)=\tau((v,0_{n-1})h^{-1})$, 
we have ${}^{(0_{n-1},w)}\chi=\chi$, ${}^{(0_{n-1},w)}\mu_{v^*}=\mu_{v^*}(\chi^2)^{\langle w,v^*\rangle}$, ${}^{\gamma_1}\chi=\chi^{-1}$, and 
${}^{\gamma_1}\mu_{v^*}=\mu_{v^*}$. 

Let $\{e_i\}_{i=1}^{n-1}$ be the canonical basis of $V_n$ and let $\{e_i^*\}_{i=1}^{n-1}$ be its dual basis of $V_n^*$. 
We set $\mu_i=\mu_{e_i^*}$ and $f_i=\widetilde{(0_{n-1},e_i)}$. 
We set $c=\chi^2$, which is a central element in $\Gamma(\C_{1,l})$ of order 2. 
Then $\Gamma(\C_{1,l})$ is generated by 
$$\{\mu_i\}_{i=1}^{n-1}\cup \{\chi\}\cup\{f_i\}_{i=1}^{n-1}\cup\{\widetilde{\gamma_1}\}.$$
Let $\Gamma_i$ be the subgroup of $\Gamma(\C_{1,l})$ generated by $\mu_i$ and $f_i$ for $1\leq i\leq n-1$, and 
let $\Gamma_n$ be the subgroup generated by $\chi$ and $\widetilde{\gamma_1}$. 
Then $\{\Gamma_i\}_{i=1}^n$ are mutually commuting family of subgroups with common center $\{e,c\}$. 
As before $\Gamma_n$ is isomorphic to $D_8$ for even $l$, and to $Q_8$ for odd $l$. 
Since $f_i\mu_i=c\mu_if_i$ and $\mu_i^2=f_i^2=e$, the subgroup $\Gamma_i$ for $1\leq i\leq n-1$ is isomorphic to $D_8$.  
Since $|\Gamma(\C_{1,l})|=2^{2n+1}$, we see that $\Gamma(\C_{1,l})$ is the central product of $\{\Gamma_i\}_{i=1}^n$. 
\end{proof}

\begin{rmk} 
It might be interesting to replace $\F_2$ with $\F_4$ in our argument and investigate the resulting fusion categories. 
In the simplest case  
$$G=\F_4^2\rtimes SL(2,\F_4)=\left\{\left(
\begin{array}{ccc}
a &b &v  \\
c &d &w  \\
0 &0 &1 
\end{array}
\right)\Bigg|\;  v,w\in \F_4,\;\left(
\begin{array}{cc}
a &b  \\
c &d 
\end{array}
\right)\in SL(2,\F_4)\right\},$$ 
$$H=\left\{\left(
\begin{array}{ccc}
1 &v &w  \\
0 &1&v  \\
0 &0 &1 
\end{array}
\right)\in G\Bigg|\;  v,w\in \F_4,\right\},$$
we get a matched pair $(SL(2,\F_4),H)$, and $\C(G,1,H,1)$ is a representation category of 
the Hopf algebra $\mathbb{C}^{A_5} \# \mathbb{C}H$, which is not a near-group category but a quadratic category. 
The class of quadratic categories include near-group categories, and it draws attention of specialists recently 
(see for example \cite{I18}).  
\end{rmk}


\end{document}